\documentclass[12pt]{article}
\usepackage[top=2.45cm, bottom=2.45cm, left=2.25cm, right=2.25cm]{geometry}
\usepackage{amsmath,amssymb,amsthm,mathtools}
\usepackage{microtype}
\usepackage{enumitem}
\usepackage{booktabs}
\usepackage[hidelinks]{hyperref}
\usepackage[nameinlink,capitalize]{cleveref}
\usepackage{xcolor}

\newtheorem{theorem}{Theorem}[section]
\newtheorem{lemma}[theorem]{Lemma}

\theoremstyle{definition}

\newtheorem{problem}[theorem]{Problem}
\theoremstyle{remark}

\newcommand{\ceil}[1]{\left\lceil #1\right\rceil}
\newcommand{\floor}[1]{\left\lfloor #1\right\rfloor}

\begin{document}
\title{Internal partitions of $5$-regular and $8$-regular graphs}
\author{Siwei Lin, Qinghou Zeng\footnote{Research partially supported by National Key R\&D Program of China (Grant No. 2023YFA1010202) and National Natural Science Foundation of China (Grant No. 12371342). Email: zengqh@fzu.edu.cn (Corresponding author)}\\
{\small Center for Discrete Mathematics, Fuzhou University, Fujian, 350003, China}}
\date{}
\maketitle

\begin{abstract}
An \emph{internal partition} of a graph is a partition of its vertex
set into two nonempty parts such that every vertex has at least half of
its neighbors in its own part. DeVos asked whether, for every fixed positive integer $r$, all but finitely many $r$-regular graphs admit an internal partition. This was previously known to hold only for
$r\in\{3,4,6\}$. In this paper, we confirm this for $r\in\{5,8\}$ by proving that every $r$-regular graph on at least $10^7$ vertices has an internal partition.
\end{abstract}

\section{Introduction}

All graphs considered in this paper are finite, simple and undirected.
A partition $V(G)=A\cup B$ into two nonempty sets is called an
\emph{internal partition}, or a \emph{friendly partition}, if every
vertex has at least as many neighbours in its own part as in the other
part. Equivalently,
$d_A(v)\ge d_B(v)$ for every $v\in A$, and
$d_B(v)\ge d_A(v)$ for every $v\in B$.
For an $r$-regular graph, this condition becomes $\delta(G[A]),\delta(G[B])\ge \ceil{r/2}.$
The same or closely related notion has also been studied under the name
of a satisfactory partition. Its algorithmic and complexity aspects
were considered by Gerber and Kobler~\cite{GK2000} and by Bazgan,
Tuza and Vanderpooten~\cite{BTV2003,BTV2006}.

Internal partitions belong to the broader theory of graph
decompositions under degree constraints. Thomassen~\cite{Thomassen1983}
asked for conditions ensuring that the vertex set of a graph can be
partitioned into two parts with prescribed lower bounds on the
corresponding internal degrees. Stiebitz~\cite{Stiebitz1996} proved that
if $a,b:V(G)\to\mathbb Z_{\ge0}$ satisfy
$d_G(v)\ge a(v)+b(v)+1$ for every vertex $v$, then there is a partition
$V(G)=A\cup B$ such that
$d_A(v)\ge a(v)$ for every $v\in A$ and
$d_B(v)\ge b(v)$ for every $v\in B$.
Kaneko~\cite{Kaneko1998} showed that the additional one in this degree
condition can be omitted for triangle-free graphs. Diwan~\cite{Diwan2000}
obtained a still stronger statement for graphs of girth at least five,
and Ma and Yang~\cite{MY2019} extended the relevant decomposition result
to $C_4$-free graphs. More recent variants under linear minimum-degree
and average-degree assumptions were studied by Ma and Wu~\cite{MW2024}
and by Wang and Wu~\cite{WW2024}.

These general decomposition theorems do not settle the internal
partition problem for arbitrary regular graphs. Indeed, put
$k=\ceil{r/2}$. To obtain an internal partition, one needs internal
degree at least $k$ in both parts. Stiebitz's theorem would require
$r\ge2k+1$, whereas $r=2k$ when $r$ is even and $r=2k-1$ when $r$ is
odd. Thus the regular case lies just beyond the range of the general
degree-splitting theorem. At the same time, complete graphs show that
exceptions are unavoidable: $K_{r+1}$ has no internal partition.
For odd $r$, the complete bipartite graph $K_{r,r}$ is another natural
example without an internal partition.

Motivated by these examples, DeVos~\cite{DeVos2009} posed the following
problem.

\begin{problem}\label{DeVos-Internal}
For every fixed positive integer $r$, do all but finitely many
$r$-regular graphs have an internal partition?
\end{problem}

The problem has an affirmative answer for several small values of $r$.
Shafique and Dutton~\cite{SD2002} proved the required finite-exception
statement for $r\in\{3,4\}$. In these cases the exceptional graphs
are $K_4$ and $K_{3,3}$ for $r=3$, and $K_5$ for $r=4$.
Ban and Linial~\cite{BL2016} treated  $r=6$ and proved that every
$6$-regular graph on at least $12$ vertices has an internal partition;
the order bound is sharp. These results establish DeVos's conjectured
finite-exception property for $r\in\{3,4,6\}$, but they do not provide
a general method for the remaining  $r$.

Ban and Linial~\cite{BL2016} introduced a useful reformulation in terms
of cohesive sets. A nonempty set $U\subseteq V(G)$ is called
\emph{$k$-cohesive} if $\delta(G[U])\ge k$. They proved that every
$r$-regular graph has a $\ceil{d/2}$-cohesive set of order at most
$\ceil{n/2}$ when $r$ is even, and of order at most $n/2+1$ when $r$
is odd. If an $r$-regular graph contains two disjoint
$\ceil{r/2}$-cohesive sets, then these sets can be extended to the two
parts of an internal partition. Thus DeVos's problem can be viewed as
the problem of finding two disjoint cohesive sets of the required
minimum degree.

For $5$-regular graphs, B\"arnkopf, Nagy and
Paulovics~\cite{BNP2024} obtained substantial progress in this
direction. They proved that every $n$-vertex $5$-regular graph contains
two $3$-cohesive sets whose intersection has order at most $n/4+1$,
and improved the asymptotic coefficient to $0.2456+o(1)$. They also
determined the $5$-regular Cayley graphs over finite Abelian groups
without an internal partition. These results show that two cohesive
sets with relatively small intersection can always be found, but the
zero-intersection conclusion needed for an internal partition remained
out of reach.

In this paper, we solve Problem \ref{DeVos-Internal} for $r\in\{5,8\}$.

\begin{theorem}\label{thm:main}
For $r\in\{5,8\}$, every $r$-regular graph on at least $10^7$ vertices has an internal partition.
\end{theorem}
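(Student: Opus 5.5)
The reduction of Ban and Linial is the starting point: it suffices to find two disjoint $k$-cohesive sets, where $k=3$ for $r=5$ and $k=4$ for $r=8$. Given such sets $U_1,U_2$, we grow them greedily. Put $A=U_1$ and $B=V\setminus A$. Whenever some vertex $v\in B\setminus U_2$ has fewer than $k$ neighbours in $B$, move it to $A$. Moving $v$ into $A$ cannot decrease any internal degree in $A$, and it cannot affect $U_2$ because $U_2$ stays inside $B$ throughout. The process stops with an internal partition. So the whole task is to produce two disjoint small cohesive sets.

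\textbf{Plan.} We look for a $k$-cohesive set of \emph{bounded} size, say at most some constant $C$, whose removal leaves a graph that still contains a $k$-cohesive set. The natural tool is a counting and discharging argument on a minimal counterexample.

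\textbf{Step 1: a short cohesive set.} Take $G$ to be $r$-regular on $n\ge 10^7$ vertices with no internal partition. We first show that $G$ has a $k$-cohesive set $U$ with $|U|\le C$, for an absolute constant $C$. The idea is to take a vertex $v$ and examine a ball of bounded radius. If the local structure is dense enough, for instance $v$ lies in many short cycles or the ball contains a subgraph of minimum degree $k$, we are done directly. Otherwise the ball is locally tree-like. Then, since $k\le r-1$, we can try to peel: every vertex of degree at most $k-1$ is repeatedly deleted from a ball $N^{\le t}[v]$ or from its complement.

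\textbf{Step 2: the complement still contains a cohesive set.} Run the $(k-1)$-core peeling on $W=V\setminus U$, deleting vertices of degree less than $k$ one at a time. The key counting fact is this. Each deleted vertex $w$ has at least $r-k+1$ edges going to already-deleted vertices or to $U$. Since $r-k+1\ge 3$ for $r=5$ and $r-k+1=5$ for $r=8$, we get the following. Let $D$ be the deleted set and assume the process deletes everything. The edges inside $D\cup U$ then number at least roughly
\[
(r-k+1)|D| - O(|U|),
\]
while the edges leaving $U$ number at most $r|U|$. For $r=8$, write the potential as $e(D)+e(D,U)\ge (r-k)|D|$, counting each deleted vertex's back-edges. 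Comparing this with $e(D)\le r|D|/2$ yields a contradiction once $(r-k+1)>r/2$ and $|D|$ is large relative to $|U|$. That inequality holds exactly: $r-k+1=3>5/2$ and $5>4$. So when $n\gg C$ the core of $W$ is nonempty. This is precisely where $r\in\{5,8\}$ (more generally the parity and slack conditions) enters. The bound $10^7$ comes from making $n$ large compared with $C\cdot r$ and the error terms.

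\textbf{Main obstacle.} The delicate part is Step 1, guaranteeing a $k$-cohesive set of bounded size. A regular graph of large girth has no small $3$-cohesive set at all: in a tree-like region every small subgraph has a leaf. So the plan must be modified. Instead of bounded size, we aim for $|U|\le \varepsilon n$ with an explicit small $\varepsilon$. We would take $U$ to be a minimal $k$-cohesive set inside a carefully chosen half, obtained from a max-cut or a Ban--Linial type set of order about $n/2$, and then shrink it by local switching. The counting of Step 2 must then be redone with the error term $r|U|$ replaced by $e(U,\overline U)$. Controlling $e(U,\overline U)$ for the minimal set is what I expect to require the bulk of the case analysis. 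Concretely, it means proving that a minimal $k$-cohesive set has few boundary edges because vertices of internal degree exactly $k$ are sparse, and this is the step I am least sure closes without a much finer argument.
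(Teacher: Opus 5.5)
\textbf{Verdict: genuine gap.} Your reduction to two disjoint $\lceil r/2\rceil$-cohesive sets is correct and is exactly Lemma~\ref{lem:extension}. However, the proposal never actually produces these two sets, so the heart of the proof is missing.

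Step~1 fails, as you note yourself. Its replacement is not an argument: you propose a minimal cohesive set inside a Ban--Linial half, shrunk by ``local switching'' until it has few boundary edges. Nothing prevents a minimal $k$-cohesive set from having all internal degrees equal to $k$, and then its boundary is the maximal $(r-k)|U|$.

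Step~2 is essentially correct, but it is the easy part. It is also not where $r\in\{5,8\}$ enters, since $r-\lceil r/2\rceil+1=\lfloor r/2\rfloor+1>r/2$ for every $r$. Done carefully, it goes as follows. A fully peeled $W=V\setminus U$ is $(k-1)$-degenerate, so $e(W,U)\ge(r-2k+2)|W|$. For $k$-cohesive $U$ we also have $e(W,U)\le(r-k)|U|$. Hence, for $r\in\{5,8\}$, any $k$-cohesive $U$ with $|U|<n/3$ already yields the second set, and more generally so does any $U$ with $e(U,V\setminus U)<(r-2k+2)(n-|U|)$.

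So your plan reduces the theorem to finding such a $U$, and that is essentially the whole problem. Ban and Linial only guarantee a cohesive set of order about $n/2$. Earlier work on the $5$-regular case controlled the intersection of two cohesive sets, not their disjointness.

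The idea you are missing is to work directly with a minimum bisection $(A,B)$ and prove that each part itself contains a cohesive set. A max-cut pushes in the wrong direction, because it makes the parts sparse. The paper's argument runs as follows.
\begin{enumerate}[label=(\alph*)]
\item Assume $G[B]$ is $(\lceil r/2\rceil-1)$-degenerate. Compare the degeneracy lower bound on $e(A,B)$ with the single-swap upper bound; this gives $\delta(G[B])=\lceil r/2\rceil-1$.
\item Move a degeneracy ordering $v_1,\dots,v_t$ of $B$ into $A$, and move any $t$ vertices $S$ back out. The result is again a bisection.
\item Minimality then gives the local inequality $e_H(S)\le\sum_{v\in S}(d_H(v)-p)$ in $H=G[A\cup\{v_1\}]$, where $p=\lfloor r/2\rfloor+1$ and $N=|A|+1$. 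This holds for all $S$ with $|S|\le\ell$ avoiding a set $X_\ell$ of size at most $r(\ell-1)$. Alongside it one gets $2e(H)\le(p+1)N$ and $p\le d_H\le p+3$.
\item A two-round peeling of degree-$p$ vertices (Lemma~\ref{lem:two-rounds}) forces the set $D$ of degree-$(p+1)$ vertices to satisfy $e_H(D)-|D|=\Omega(N)$.
\item A tree-partition argument (Lemma~\ref{lem:small-dense-set}) then extracts a set $S\subseteq D\setminus X_T$ of order $O(\sqrt N)$ with $e_H(S)>|S|$. This contradicts the local inequality.
\end{enumerate}
The hypothesis $r\in\{5,8\}$ is used in these numerical steps, for instance $2e(H)\le(p+1)N$ and $r\le p+3$. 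It is not used in peeling a complement.
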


A nonempty set $S\subseteq V(G)$ is \emph{$k$-cohesive} if
$G[S]$ has minimum degree at least $k$. A \emph{bisection} is a
partition $(A,B)$ satisfying $\bigl||A|-|B|\bigr|\le1$. It is
\emph{minimum} if the number of edges between its two parts is minimum
among all bisections of $G$. We establish the following stronger
statement.

\begin{theorem}\label{thm:structure}
Let $r\in\{5,8\}$, and let $G$ be an $r$-regular graph on at least $10^7$ vertices. Each part of every minimum bisection of $G$
contains a $\ceil{r/2}$-cohesive set.
\end{theorem}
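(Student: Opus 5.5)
Let $(A,B)$ be a minimum bisection of $G$ with $n\ge 10^7$, write $k=\ceil{r/2}$ (so $k=3$ for $r=5$ and $k=4$ for $r=8$), and let $e(A,B)$ be the number of crossing edges. By symmetry it suffices to show that $A$ contains a $k$-cohesive set. The natural candidate is the $k$-core of $G[A]$: repeatedly delete from $A$ any vertex with fewer than $k$ neighbours in the current set. I will argue by contradiction, assuming this peeling empties $A$. The proof then has three steps.

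\textbf{Step 1: upper bound on $e(A,B)$.} Since $G$ is $r$-regular, a random bisection gives $e(A,B)\le (r/2+o(1))\,n/2$. I expect a sharper bound to be needed. First, minimality constrains pairs: for any $a\in A$ and $b\in B$, swapping them cannot reduce the cut, so
\[
d_B(a)-d_A(a)+d_A(b)-d_B(b)-2\,[ab\in E]\le 0 .
\]
This gives a global constraint. If $A$ contains a vertex $a$ with $d_B(a)>d_A(a)$, then every $b\in B$ not adjacent to $a$ satisfies $d_A(b)-d_B(b)\le -(d_B(a)-d_A(a))<0$. Consequently, either all of $A$ is ``internally happy'' or almost all of $B$ is strictly happy, and symmetrically.

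\textbf{Step 2: lower bound from the peeling order.} Order $A$ by the deletion sequence $v_1,\dots,v_{|A|}$. When $v_i$ is deleted it has fewer than $k$ neighbours among the remaining vertices. Each edge of $G[A]$ is charged to its earlier endpoint, so
\[
e(G[A])\le (k-1)|A| .
\]
Hence
\[
e(A,B)=r|A|-2e(G[A])\ge (r-2k+2)|A| .
\]
This equals $|A|$ for $r=5$ and $2|A|$ for $r=8$, which is too weak on its own. The sharper ingredient is that at most $O(1)$ vertices can be deleted ``early'', and that the deleted vertices' neighbourhoods can be re-used. The plan is a local switching argument. Take the last $t$ vertices of the peeling order (a bounded-size set $S$ with few internal edges) together with a carefully chosen set $T\subseteq B$ of equal size, and exchange them. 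Because each $v\in S$ had fewer than $k$ neighbours in the remaining part, moving $v$ to $B$ changes the cut by at most $r-2(k-1)$ minus corrections. Combining this with the Step 1 structure should produce a bisection with strictly fewer crossing edges, contradicting minimality.

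\textbf{Step 3: existence of the exchange partner $T$.} We need $T\subseteq B$ consisting of vertices with many neighbours in $A$, so that moving them to $A$ gains cut edges back. Such vertices are guaranteed by counting $e(A,B)$ against $|B|$. The large-$n$ hypothesis ($n\ge10^7$) enters here: we pick $T$ far from $S$ (distance at least $3$), so the two swaps do not interact. This is possible because a bounded ball contains only $O(r^2)$ vertices, while the supply of high-$d_A$ vertices in $B$ is linear in $n$.

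\textbf{Main obstacle.} I expect the hardest part to be the parity-sensitive case analysis for $r=5$ and $r=8$. In both cases $r-2(k-1)$ is small (namely $1$ and $2$), so single-vertex swaps change the cut by $0$. One must therefore find a bounded configuration of several vertices whose simultaneous exchange strictly decreases the cut. I would first try exchanging the final peeled vertex together with its few remaining $A$-neighbours against a matched star in $B$. If that fails, I would enumerate the possible local degree profiles $(d_A,d_B)$ and show that each admits an improving swap of size at most some constant. The explicit constant $10^7$ comes from ensuring room for these disjoint far-apart exchanges.
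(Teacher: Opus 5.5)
\textbf{There is a genuine gap.} The decisive claim of your plan is that some exchange of \emph{bounded} size strictly lowers the cut, to be found by enumerating local degree profiles. This is asserted, not argued, and it is exactly where the difficulty lies. Here is what minimality actually gives, in your orientation ($A$ has no $k$-cohesive set, $p=\floor{r/2}+1=r-k+1$, $H=G[B\cup\{v_1\}]$, $N=|B|+1$).

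First, you must move an \emph{initial} segment $v_1,\dots,v_t$ of the peeling order of $A$, not the last vertices. The peeling only bounds the degree into not-yet-deleted vertices, so late vertices can have large $d_A$. Moving $v_1,\dots,v_t$ one at a time to $B$ \emph{lowers} the cut by at least $r-2k+2$ each. Minimality then forces $e_H(S)\le\sum_{v\in S}(d_H(v)-p)$ for every $S\subseteq V(H)$ with $|S|\le t$ that avoids $N(v_2)\cup\dots\cup N(v_t)$. The pair inequality with $v_1$ gives $\delta(H)\ge p$. Your Step 2 degeneracy bound (with the $\binom k2$ correction), which you dismiss as too weak, gives $2e(H)\le(p+1)N$. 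This is the real global input, and it is where $r\in\{5,8\}$ enters (cf.\ Lemma~\ref{lem:exchange}).

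But any $(p+1)$-regular graph of girth larger than $C$ satisfies all of these constraints for every set of size at most $C$, since such sets induce forests. So, on this information, no analysis of bounded configurations can reach a contradiction: the violating set must have size growing with $N$.

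The paper supplies exactly this global step. A two-round peeling that uses only sets of size at most $17$ (Lemma~\ref{lem:two-rounds}) shows that the set $D$ of degree-$(p+1)$ vertices of $H$ satisfies $e_H(D)-|D|\ge(\frac{p-1}{2}-\frac2p)N-O(1)$. A spanning-tree partition into connected pieces of order $\Theta(\sqrt N)$ (Lemma~\ref{lem:small-dense-set}) then produces $S\subseteq D$ with $|S|\le 24\ceil{\sqrt N}$ and $e_H(S)>|S|=\sum_{v\in S}(d_H(v)-p)$. This contradicts the local inequality with $t=|S|$. The hypothesis $n\ge10^7$ serves to make the $O(\sqrt N)$ forbidden vertices in $N(v_2)\cup\dots\cup N(v_t)$ negligible against the linear surplus; it is not about keeping bounded swaps far apart.

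Your Step 3 also misreads the structure. The pair inequality with $v_1$ gives $d_A(u)\le k-1+\mathbf 1_{uv_1\in E(G)}$ for every $u\in B$, so $B$ has no supply of vertices with many neighbours in $A$. An improving $T$ has to be dense relative to the weights $d_H(v)-p$, not assembled from individually good vertices. Finally, the random-bisection bound in Step 1 plays no role in the argument.
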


Two disjoint $\ceil{r/2}$-cohesive sets can be extended to an internal
partition. We include the short proof in Section~\ref{sec:exchange}.
Consequently, Theorem~\ref{thm:structure} implies
Theorem~\ref{thm:main}. The resulting internal partition need not be a
bisection, and the threshold $10^7$ is not claimed to be optimal. 


\noindent{\bf Notation.}
For a graph $G$ and a set $S\subseteq V(G)$, write
$e_G(S)=|E(G[S])|$ and $e(G)=|E(G)|$. For disjoint vertex sets $S,T$,
let $e_G(S,T)$ be the number of edges with one end in each set. We write
$d_G(v)$ for the degree of $v$ in $G$. When the ambient graph is $G$,
put $d_S(v)=|N_G(v)\cap S|$; in particular,
$e_G(\{v\},S)=d_S(v)$ when $v\notin S$.

\section{Minimum bisections and local inequalities}
\label{sec:exchange}

We first record the connection between disjoint cohesive sets and internal partitions.

\begin{lemma}[B\"arnkopf, Nagy and Paulovics \cite{BNP2024}]\label{lem:extension}
Let $G$ be an $r$-regular graph. If $G$ contains two disjoint $\ceil{r/2}$-cohesive sets, then $G$ has an internal partition.
\end{lemma}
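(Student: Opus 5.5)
Let $k=\ceil{r/2}$ and let $S_1,S_2$ be disjoint $k$-cohesive sets. I will grow the two sets into a partition by a maximality argument followed by a local-improvement (potential) argument.

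First, among all pairs $(A,B)$ of disjoint sets with $S_1\subseteq A$, $S_2\subseteq B$, and $\delta(G[A]),\delta(G[B])\ge k$, choose one with $|A\cup B|$ maximum. Such pairs exist, since $(S_1,S_2)$ is one. If $A\cup B=V(G)$, then $(A,B)$ is an internal partition, because for an $r$-regular graph being internal is equivalent to both parts having minimum degree at least $k$. This also uses $k\ge r-k$, which holds since $k=\ceil{r/2}$.

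Otherwise let $C=V(G)\setminus(A\cup B)\neq\emptyset$. Instead of insisting on cohesion throughout, I will switch to a potential argument on all partitions $(A',B')$ of $V(G)$ with $A\subseteq A'$ and $B\subseteq B'$. Among these, pick one minimizing the number of cut edges $e(A',B')$. Both parts are nonempty because $S_1\subseteq A'$ and $S_2\subseteq B'$.

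Now suppose some vertex $v$ is unhappy, meaning it has strictly more neighbours across the cut than in its own part.
\begin{itemize}
\item If $v\in C$, moving $v$ to the other side strictly decreases $e(A',B')$. This keeps $A\subseteq A'$ and $B\subseteq B'$, so it contradicts minimality.
\item If $v\in A$, then $v\in A'$, and since $A\subseteq A'$ we get $d_{A'}(v)\ge d_A(v)\ge k\ge r-k\ge d_{B'}(v)$. So $v$ is not unhappy.
\item If $v\in B$, the same argument applies with the roles of $A$ and $B$ swapped.
\end{itemize}
Hence every vertex is happy and $(A',B')$ is an internal partition.

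In fact the maximality step is unnecessary: one can start directly from $A=S_1$, $B=S_2$ and run the minimum-cut argument over partitions containing them.

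The only point needing care is the inequality $d_{A'}(v)\ge d_{B'}(v)$ for the fixed vertices. It relies on their internal degree never dropping, which holds because the sets $S_1,S_2$ are never moved, together with $\ceil{r/2}\ge r-\ceil{r/2}$. The moves applied to vertices of $C$ strictly decrease the cut size, so the process terminates.
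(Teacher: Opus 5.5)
Your proof is correct, but it takes a different route from the paper's.

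\textbf{The paper's route.} It runs a one-sided greedy closure. Starting from the two cohesive sets $U,W$, it repeatedly moves into $U$ any outside vertex with at least $\lceil r/2\rceil$ neighbours in the current $U$. When no such vertex remains, it puts every leftover vertex into $W$. The leftover vertices are then automatically satisfied: each has at most $\lceil r/2\rceil-1$ neighbours in the final $U$, hence at least $r-\lceil r/2\rceil+1\ge r/2$ neighbours on its own side.

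\textbf{Your route.} You minimize $e(A',B')$ over all partitions extending $(S_1,S_2)$. Free vertices are happy by local optimality of the cut. Anchored vertices are happy because their own-part degree can only grow, together with $\lceil r/2\rceil\ge r-\lceil r/2\rceil$.

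Your preliminary two-sided maximality step is indeed superfluous, as you note. A one-sided version of it is essentially the paper's proof: take a maximal cohesive superset of $S_1$ avoiding $S_2$, so no outside vertex has $\lceil r/2\rceil$ neighbours in it, and dump the rest into the other side.

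\textbf{Comparison.} The paper's argument is explicit and monotone: each vertex is added at most once, and no potential function is needed. Yours is a symmetric extremal/local-search argument. It shows more: every extension of $(S_1,S_2)$ that is optimal under single moves of free vertices is internal, with at most $e(G)$ improving moves if run as an algorithm. It also mirrors the cut-minimization reasoning the paper later applies to minimum bisections.
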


\begin{proof}[\bf Proof.]
Let $U$ and $W$ be two such sets. Let us add the vertices, which have at least $\ceil{r/2}$ neighbours in $A$, from the complement of $U\cup W$ one by one to $U$.
After the procedure stops, we add the remaining vertices to $W$ and the resulting partition is internal.
\end{proof}

Recall that a graph is \emph{$d$-degenerate} if each of its nonempty
induced subgraphs has a vertex of degree at most $d$. A graph has no
$k$-cohesive set if and only if it is $(k-1)$-degenerate. It is easy to obtain that any $d$-degenerate graph $G$ on $n\ge d$ vertices satisfies
\begin{equation*}
 e(G)\le d(n-d)+\binom d2=dn-\binom{d+1}{2}.
\end{equation*}

The next key lemma translates the minimality of a bisection into the local edge inequality used throughout the proof.

\begin{lemma}\label{lem:exchange}
Let $r\in\{5,8\}$, let $G$ be an $r$-regular graph on at least $40$ vertices, and let $(A,B)$ be a minimum bisection. Suppose that $B$ contains no $\ceil{r/2}$-cohesive set. Let $a=|A|$, $b=|B|$, $p=\floor{r/2}+1$ and $N=a+1$.
There is an ordering $v_1,\ldots,v_b$ of $B$ such that, for $H=G[A\cup\{v_1\}]$, the following statements hold: 
\begin{enumerate}[label=(\roman*)]
    \item $2e(H)\le(p+1)N$;
    \item For every vertex $v\in V(H)$, $p\le d_H(v)\le r\le p+3$;
    \item Define $X_\ell=\cup_{i=2}^{\ell}(N_{G}(v_i)\cap V(H))$ for each $2\le \ell\le b$. 
        Then $|X_\ell|\le r(\ell-1)$ and for every nonempty set $S\subseteq V(H)\setminus X_\ell$ with $|S|\le \ell$,
        \begin{equation}\label{eq:local-exchange}
             e_H(S)\le\sum_{v\in S}(d_H(v)-p).
        \end{equation}
\end{enumerate}
\end{lemma}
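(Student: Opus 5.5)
The plan is to get every part of the lemma from one exchange principle. Since $(A,B)$ is a minimum bisection, if we move a set $S\subseteq A\cup\{v_1\}$ into $B$ and move a set $T\subseteq B$ into $A$ with the same cardinality (or differing by at most one, using $|a-b|\le1$), the cut cannot decrease. Put $k=\ceil{r/2}$. Because $B$ has no $k$-cohesive set, $G[B]$ is $(k-1)$-degenerate. We fix a degeneracy ordering $v_1,\ldots,v_b$ of $B$ in which each $v_i$ has at most $k-1$ neighbours among $v_{i+1},\ldots,v_b$.

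\textbf{Choosing the ordering.} We want the first vertices to be ``cheap to move into $A$''. For any prefix $T_\ell=\{v_1,\dots,v_\ell\}$, moving $T_\ell$ to $A$ changes the cut by
\[
\sum_{i\le \ell}\bigl(d_B(v_i)-d_A(v_i)\bigr)-2e(T_\ell).
\]
The degeneracy ordering controls $d_B(v_i)$ through its back-neighbours. In the construction we will tie-break so that $v_1$ minimizes $d_A$ among admissible choices; this forces $d_A(v_1)\le c/b$, where $c=e(A,B)$. Note that $p=k$ for $r=5$ and $p=k+1$ for $r=8$, and the claimed bound $r\le p+3$ in (ii) is just $5\le6$ and $8\le8$.

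\textbf{Step (i).} From the degenerate edge bound,
\[
e(B)\le (k-1)b-\binom k2 ,\qquad\text{so}\qquad c=rb-2e(B)\ge b+6 \ (r=5),\qquad c\ge 2b+12\ (r=8).
\]
Then
\[
2e(H)=ra-c+2d_A(v_1)\le ra-c\bigl(1-2/b\bigr).
\]
Inserting $b\ge a-1$ and $n\ge 40$ gives $2e(H)\le (p+1)(a+1)$ in both cases; this is routine arithmetic.

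\textbf{Step (ii).} For $u\in A$, swapping $u$ with some $w\in B$ changes the cut by
\[
\bigl(d_A(u)-d_B(u)\bigr)-\bigl(d_A(w)-d_B(w)\bigr)+2[uw\in E].
\]
If $w\notin N(u)$ and $d_B(w)\le k-1$, minimality yields $2d_A(u)-r\ge r-2(k-1)$. This gives $d_A(u)\ge 3=p$ for $r=5$ and $d_A(u)\ge 5=p$ for $r=8$.

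So the issue is to produce such a $w$ outside $N(u)$. I will apply degeneracy to $G[B\setminus N(u)]$ and, more robustly, use swaps of $u$ with a short suffix-closed block of the ordering, so that the back-neighbour bound replaces the global degree bound. The same swap argument applied to $v_1$, now using that $v_1$ was chosen with $d_B(v_1)$ small relative to the rest, gives $d_H(v_1)=d_A(v_1)\ge p$.

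\textbf{Step (iii).} The bound $|X_\ell|\le r(\ell-1)$ is immediate since each $v_i$ has degree $r$. For \eqref{eq:local-exchange}, take $S\subseteq V(H)\setminus X_\ell$ with $|S|\le\ell$. Move $S$ into $B$ and move $\{v_2,\ldots,v_{|S|}\}$ (or the appropriate prefix, keeping the bisection balanced) into $A$. Since $S$ avoids $X_\ell$, there are no edges between $S$ and the moved $v_i$, so the cut change splits into two parts:
\begin{itemize}[label={}]
\item the contribution of $S$, namely $\sum_{v\in S}\bigl(d_H(v)-(r-d_H(v))\bigr)-2e_H(S)$, up to the $v_1$ correction;
\item the contribution of the prefix, which by the degeneracy ordering is at most $\sum_i\bigl(2(k-1)-r\bigr)$, each $v_i$ having at most $k-1$ neighbours further along in $B$.
\end{itemize}
Nonnegativity of the total, after dividing by $2$, should rearrange exactly to $e_H(S)\le\sum_{v\in S}(d_H(v)-p)$.

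The main obstacle is the bookkeeping in (ii) and (iii). The ordering must simultaneously be a degeneracy ordering and make $v_1$ cheap, and the per-vertex savings from the moved prefix must be exactly enough to turn ``$r$'' into ``$p$''. This is tight for $r=8$, where $2(k-1)-r=-2$ versus the required shift $2p-r=2$. I would first verify the $r=8$ arithmetic carefully, since that is where the constant $p=\floor{r/2}+1$ is sharp.
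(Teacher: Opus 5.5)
Your proof has genuine gaps in Steps (i) and (ii). Both are closed by the same idea, which the proposal never uses: the single swap of a vertex of $A$ with a vertex of minimum $B$-degree.

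\textbf{Step (ii).} The gap comes from aiming at the wrong inequality. You try to prove $d_A(u)\ge p$ for $u\in A$, which requires a non-neighbour $w\in B$ of $u$ with $d_B(w)\le k-1$. Nothing you establish excludes that every vertex of $B$-degree at most $k-1$ lies in $N(u)$. Applying degeneracy to $G[B\setminus N(u)]$ only bounds the degree of $w$ inside $B\setminus N(u)$, not $d_B(w)$. In fact single swaps are consistent with $d_A(u)=p-1$: for $r=5$, take $d_A(u)=2$ with $u$ adjacent to every $w$ with $d_B(w)=2$; then $2+2+1\ge5$ and $2+3\ge5$. What the lemma asks is $d_H(u)=d_A(u)+\mathbf 1_{uv_1\in E(G)}\ge p$. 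This follows at once from swapping $u$ with $v_1$ itself: minimality gives $d_A(u)+d_B(v_1)+\mathbf 1_{uv_1\in E(G)}\ge r$, and $d_B(v_1)\le k-1$ yields $d_H(u)\ge r-k+1=p$. The edge $uv_1$ you were trying to avoid is exactly what $H$ counts. This is also just the case $|S|=1$ of your Step (iii) argument.

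\textbf{Step (i).} This step rests on a false claim. Any admissible $v_1$ has $d_B(v_1)\le k-1$, hence $d_A(v_1)\ge p$, whereas $c/b<p$. To see the latter, take $v\in B$ of minimum degree $\delta_B$ in $G[B]$. The swap inequality gives $d_B(u)\le\delta_B+\mathbf 1_{uv\in E(G)}$ for every $u\in A$. Summing over $A$ gives $c\le \delta_B(a-1)+r\le (k-1)b+r$, so $c/b\le k-1+r/b<p$ since $b\ge20$. For $r\in\{5,8\}$ you also have $c\ge(r-2k+2)b+k(k-1)=(k-2)b+k(k-1)$. Comparing this with $c\le\delta_B(a-1)+r$ forces $\delta_B=k-1$. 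Hence every admissible $v_1$ has $d_A(v_1)=p$, and $2e(H)=ra-c+2p$ gives (i). For $r=8$ the crude bound $d_A(v_1)\le 8$ would suffice, but for $r=5$ you need at least $d_B(v_1)\ge1$.

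\textbf{Step (iii).} This is the paper's argument, but the count must include $v_1$. First move the prefix $v_1,\ldots,v_t$ with $t=|S|$ into $A$; each move lowers the cut by at least $r+2-2k=2p-r$. Then move $S\subseteq A\cup\{v_1\}$ into $B$. This restores the part sizes $(a,b)$, whereas moving only $v_2,\ldots,v_{|S|}$ does not. Since $S\cap X_\ell=\emptyset$, you have $d_{A_t}(v)=d_H(v)$, so you get exactly $e_H(S)\le\sum_{v\in S}(d_H(v)-p)$. Note that $2(k-1)-r=r-2p$ is an identity, so the bound is equally tight for $r=5$ and $r=8$.
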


\begin{proof}[\bf Proof.]
Write $c=e_G(A,B)$. 
If $u\in A$ and $v\in B$ are exchanged, the change in the cut size is $2\left(d_A(x)+d_B(y)-r+\mathbf 1_{xy\in E(G)}\right)$.
By the minimality of the bisection, for $u\in A,\ v\in B$, we have
\begin{equation}\label{eq:pair-exchange}
 d_A(u)+d_B(v)+\mathbf 1_{uv\in E(G)}\ge r.
\end{equation}
Since $B$ contains no $\ceil{r/2}$-cohesive set, $G[B]$ is $(\ceil{r/2}-1)$-degenerate.
Therefore
\begin{equation}\label{eq:cut-lower}
 e_G(A,B)
 =rb-2e_G(B)\ge (r-2\ceil{r/2})b+2b+\ceil{r/2}(\ceil{r/2}-1)
\end{equation}
Let $\delta_B$ be the minimum degree of $G[B]$, and choose $v\in B$ with
$d_B(v)=\delta_B$. From~\eqref{eq:pair-exchange}, for every $u\in A$,
\[
 d_B(u)=r-d_A(u)\le \delta_B+\mathbf 1_{uv\in E(G)}.
\]
Summing over $A$ yields
\begin{equation}\label{eq:cut-upper}
 e_G(A,B)\le a\delta_B+r-\delta_B.
\end{equation}
If $\delta_B\le\ceil{r/2}-2$, then the inequality~\eqref{eq:cut-upper}, together with $|a-b|\le1$, gives $e_G(A,B)\le (\ceil{r/2}-2)b+r$, contradicting~\eqref{eq:cut-lower} for $r=5,8$. Hence $\delta_B=\ceil{r/2}-1$. 

Now choose $v_1\in B$ with $d_B(v_1)=\delta_B$.
For $H=G[A\cup\{v_1\}]$, equation~\eqref{eq:pair-exchange} shows that every $u\in A$ satisfies
\[
 d_H(u)=d_A(u)+\mathbf 1_{uv_1\in E(G)}\ge \floor{r/2}+1.
\]
Also $d_H(v_1)=\floor{r/2}+1$. Hence $d_H(u)\ge \floor{r/2}+1$ for all $u\in A\cup\{v_1\}$, which proves $(ii)$.

Furthermore, we have $2e(H)=ra-e_G(A,B)+2\floor{r/2}+2$. Then~\eqref{eq:cut-lower} gives
\begin{align*}
    2e(H)\le (2\ceil{r/2}-2)b-\ceil{r/2}(\ceil{r/2}-1)+2\floor{r/2}+2.
\end{align*}
Thus, for $r=5,8$, we have $2e(H)\le (\floor{r/2}+2)b$, which proves $(i)$ together with $b\le a+1$.

Now we consider $(iii)$. Since $G[B]$ is $(\ceil{r/2}-1)$-degenerate, we may continue the ordering from the chosen $v_1$ by successively deleting a vertex of degree at most $\ceil{r/2}-1$ in the remaining graph on $B$.
For $0\le i\le b$, set $B_i=B\setminus\{v_1,\ldots,v_i\}$ and $A_i=A\cup\{v_1,\ldots,v_i\}$ with $B_0=B$ and $A_0=A$. Thus
$d_{B_{i-1}}(v_i)\le \ceil{r/2}-1$.
Moving $y_i$ from $B_{i-1}$ to $A_{i-1}$ changes the cut size by
$2d_{B_{i-1}}(v_i)-r\le 2\ceil{r/2}-r-2$. Consequently,
\[
 e_G(A_i,B_i)\le e_G(A,B)-(2\ceil{r/2}-r-2)i.
\]
Fix $1\le t\le b$. Now take any $S\subseteq A_t$ with $|S|=t$ and move all vertices of
$S$ to the other part.
Since, after vertices removing, $A_t\setminus S$ has exactly $a$ vertices and the another part has exactly $b$ vertices, the resulting parts is also a bisection.
Moreover, the change in the cut size is exactly
\[
 2\sum_{v\in S}d_{A_t}(v)-rt-2e_G(S).
\]
By the minimality of $(A,B)$,
\begin{align*}
-(2\ceil{r/2}-r-2)t+2\sum_{v\in S}d_{A_t}(v)-rt-2e_G(S)\ge0.
\end{align*}
Therefore
\begin{equation}\label{eq:multi-exchange}
 e_G(S)\le\sum_{v\in S}(d_{A_t}(v)-\ceil{r/2}+1).
\end{equation}
Fix $\ell\le b$. Since every $v_i$ has degree $r$, we have
$|X_\ell|\le r(\ell-1)$. 
Let $S\subseteq V(H)\setminus X_\ell$ be a nonempty set with $t=|S|\le \ell$. Then no vertex of $S$ is adjacent to any of $v_2,\ldots,v_t$. 
Hence $d_{A_t}(v)=d_H(v)$ for every $v\in S$ and $e_G(S)=e_H(S)$. Conclusion $(iii)$ now follows from~\eqref{eq:multi-exchange}.
\end{proof}

\section{Small dense subgraphs}
\label{sec:local}
In this section, we mainly consider small vertex sets spanning more edges than vertices in special graphs with bounded degrees.

\begin{lemma}\label{lem:two-rounds}
Let $p\ge3$ be an integer, and let $H$ be a graph on $N$ vertices such that $2e(H)\le(p+1)N$ and $p\le d_H(v)\le p+3$ for $v\in V(H)$.
Suppose that $X$ is a subset of $V(H)$ such that for $S\subseteq V(H)\setminus X$ with $|S|\le17$,
\begin{equation}\label{eq:small-local}
 e_H(S)\le\sum_{v\in S}(d_H(v)-p).
\end{equation}
Let $D=\{v\in V(H):d_H(v)=p+1\}$. Then
\begin{equation}\label{eq:degree-surplus}
 e_H(D)-|D|
 \ge \left(\frac{p-1}{2}-\frac2p\right)N
       -4(p+1)|X|.
\end{equation}
\end{lemma}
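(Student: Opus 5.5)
The plan is a weighted counting (discharging) argument built on the surplus $s(v)=d_H(v)-p\in\{0,1,2,3\}$. Write $P=\{v:s(v)=0\}$, so $D=\{v:s(v)=1\}$, and let $Q=V(H)\setminus(P\cup D)$. The hypothesis $2e(H)\le(p+1)N$ says exactly that $\sum_v s(v)\le N$. Because every vertex of $D$ has degree $p+1$, we have the identity
\[
 e_H(D)-|D| \;=\; \frac{p-1}{2}\,|D| \;-\; \frac12\, e_H(D,P\cup Q).
\]
So \eqref{eq:degree-surplus} is equivalent to the bound
\[
 (p-1)\bigl(N-|D|\bigr)+e_H(D,P\cup Q)\;\le\;\frac4p N+8(p+1)|X|.
\]
In words, we must show that the vertices outside $D$ and the edges leaving $D$ are both rare. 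The tool is \eqref{eq:small-local}, applied to sets built from a vertex of $P$ together with nearby vertices.

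\textbf{Step 1: the effect of $X$.} Let $X^+$ be $X$ together with its neighbours, so $|X^+|\le(p+4)|X|$. Every vertex of $X^+$ loses at most $p+3$ edges and at most $p-1$ units in the displayed inequality. Discarding $X^+$ therefore costs at most $(2p+2)|X^+|$. This is absorbed by the term $8(p+1)|X|$, provided all sets used below lie within distance one of a vertex outside $X^+$ (or the neighbourhood is adjusted suitably). From now on we may assume that every set we test avoids $X$.

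\textbf{Step 2: first round of local facts.} For $v\in P$, the set $S=\{v\}\cup N_H(v)$ has at most $p+4\le 17$ vertices. Applying \eqref{eq:small-local} to it gives
\[
 \sum_{u\in N(v)} s(u)\;\ge\; p+e_H(N(v)).
\]
In particular $P$ is independent (take $|S|=2$). For a vertex $u$ with $s(u)=j$, the set consisting of $u$ and any $k$ of its $P$-neighbours gives $k\le j$; hence each $u\in D$ has at most one neighbour in $P$. More generally, a union of a few $P$-vertices with their common high-surplus neighbours must carry enough surplus.

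\textbf{Step 3: first discharging round.} Give each vertex the initial charge $s(v)-1$. By the bound $\sum_v s(v)\le N$, the total charge is at most $0$. Each $P$-vertex starts at $-1$ and must reach $0$ together with its share of the boundary edges counted in the displayed inequality. It pulls charge along its edges from neighbours of surplus at least $1$, and Step 2 guarantees that enough surplus is available in its neighbourhood.

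\textbf{Step 4: second round, the main obstacle.} The difficulty is that a vertex $u\in Q$, or a cluster of $D$-vertices, may be drained by many $P$-vertices at once. I will handle this with a second round that applies \eqref{eq:small-local} to sets of the form
\[
 S=\{u\}\cup\bigl(N(u)\cap P\bigr)\cup\Bigl(\bigcup_{v\in N(u)\cap P}N(v)\Bigr),
\]
pruned to at most $17$ vertices. This should be possible for $p\in\{3,5\}$, since only a bounded number of $P$-neighbours can attach to each vertex. These sets show that overloaded vertices sit in neighbourhoods whose total surplus exceeds their degree deficit. That excess is redistributed, and the leftover slack per vertex is what produces the error term $\frac2p N$.

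Concretely, I expect the final accounting to be
\[
 (p-1)\bigl(|P|+|Q|\bigr)+e(D,P\cup Q)\;\le\;\frac4pN,
\]
obtained via $|P|\lesssim\frac2p N$-type bounds, with $Q$ paid for by its own excess surplus. Checking that the $17$-vertex limit suffices in every case of this second round, namely how the $P$-neighbourhoods overlap around a vertex of surplus $2$ or $3$, is where most of the case analysis will lie.
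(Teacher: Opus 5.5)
There is a genuine gap. Your reformulation is correct: since every vertex of $D$ has degree $p+1$, the lemma is equivalent to $(p-1)(N-|D|)+e_H(D,P\cup Q)\le\frac4pN+8(p+1)|X|$. Your Step~2 facts are sound: outside $X$, $P$ is independent, and a vertex $u$ has at most $s(u)$ neighbours in $P$. The charge $s(v)-1$ just encodes $n_2+2n_3\le n_0$, where $n_j$ is the number of vertices of surplus $j$.

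What is missing is the estimate the lemma rests on, namely $|P|=O(N/p^2)$. It is already forced when $Q=X=\emptyset$. Then every vertex of $P$ sends all $p$ of its edges into $D$, your left-hand side equals $(2p-1)|P|$, and the target reads $|P|\le\frac{4N}{p(2p-1)}$. The bounds you announce, $|P|$ of order $\frac2pN$, are too weak by a factor of order $p$. So is the bound $|P|\le N/p$ that Step~2 gives via $p|P|=e_H(P,D\cup Q)\le\sum_u s(u)\le N$.

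Steps~3 and~4 specify no discharging rules and verify nothing, and the Step~2 facts alone cannot suffice. Take $p=5$, $Q=\emptyset$, $|P|=N/6$, every vertex of $D$ with exactly one neighbour in $P$, and every $N(v)$ with $v\in P$ independent. All your Step~2 facts hold, yet the target fails ($\frac32N$ versus $\frac45N$). This configuration is excluded only by sets like $\{x,y\}\cup(N(x)\cap P)\cup(N(y)\cap P)$ with $x,y\in D$ adjacent, that is, by a second layer of structure.

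The missing idea is a second peeling round followed by a global edge count. Delete $P_0=P\setminus X$; by Step~2, every vertex of $H_1=H-P_0$ outside $X$ keeps degree at least $p$. Let $P_1$ be the vertices outside $X$ whose degree in $H_1$ is exactly $p$. Each such $u$ has exactly $s(u)$ neighbours in $P_0$, so $C_u=\{u\}\cup(N(u)\cap P_0)$ satisfies $|C_u|\le4$ and $e_H(C_u)=\sum_{x\in C_u}s(x)$. Apply the hypothesis to $C_x\cup C_y$ for adjacent $x,y\in P_1$. Also apply it to a vertex $v$ together with $s(v)+1\le4$ of its neighbours in $P_0\cup P_1$, with each neighbour $u\in P_1$ replaced by $C_u$; this uses at most $1+4\cdot4=17$ vertices, which is where $17$ comes from. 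These show that $P_1$ is independent and that $H_2=H-(P_0\cup P_1)$ still has minimum degree at least $p$ outside $X$.

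Now the identity $\sum_{v\in V(H_1)}(p+1-d_{H_1}(v))=(p+1)N-2e(H)+(p-1)|P_0|$ forces $|P_1|\ge(p-1)|P_0|-(p+1)|X|$. With $q=|P_0|+|P_1|$, the chain $(p+1)N-2pq\ge2e(H_2)\ge p(N-q-|X|)$ forces $q\le N/p+|X|$. Hence $n_0\le N/p^2+\frac{2(p+1)}{p}|X|$. Your left-hand side is at most $(2p-1)n_0+(2p+1)n_2+(2p+2)n_3\le4pn_0$, which finishes the proof.

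Two smaller points. In Step~1, $|X^+|$ can be $(p+4)|X|$, so your cost $(2p+2)|X^+|$ can reach $2(p+1)(p+4)|X|$, which is not absorbed by $8(p+1)|X|$. It suffices to keep $X$ out of the peeled sets and charge each vertex of $X$ at most $p+1$ in the deficit count. Also, a case analysis valid only for $p\in\{3,5\}$ would not prove the lemma as stated for all $p\ge3$.
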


\begin{proof}[\bf Proof.]
Put $w(v)=d_H(v)-p\in\{0,1,2,3\}$.
Let
\[
 P_0=\{v\in V(H)\setminus X:d_H(v)=p\},
 \qquad H_1=H-P_0.
\]
Note that $P_0$ is independent, since an edge between two of its vertices
would violate~\eqref{eq:small-local} on a two-vertex set of total weight zero.
We claim that for $v\in V(H_1)\setminus X$,
\begin{equation}\label{eq:first-residual-degree}
 d_{H_1}(v)\ge p.
\end{equation}
Otherwise such a vertex $v$ would have at least $w(v)+1$ neighbours
in $P_0$. Taking $v$ together with $w(v)+1$ of these neighbours gives
a set of at most five vertices with at least $w(v)+1$ edges and total
weight $w(v)$, contrary to~\eqref{eq:small-local}.

Let
\[
 P_1=\{v\in V(H_1)\setminus X:d_{H_1}(v)=p\},\text{ and }C_v=\{v\}\cup(N_H(v)\cap P_0)\text{ for $v\in P_1$}.
\]
Note that, for some vertex $v$ of $V(H_1)\setminus X$, $v\in P_1$ if and only if it has exacly $w(v)$ neighbours in $P_0$. 
Thus $|N_H(v)\cap P_0|=w(v)$ and $|C_v|\le4$. Furthermore, by the independence of $P_0$, we have $e_H(C_v)=\sum_{u\in C_v}w(u)=w(v)$.
We also claim that $P_1$ is independent. Indeed, if distinct $x,y\in P_1$
were adjacent, then 
\[ 
    e_H(C_x\cup C_y)
    = w(x)+w(y)+1
    >\sum_{u\in C_x\cup C_y}w(u),
\]
which, together with $|C_x\cup C_y|\le8$, contradicts~\eqref{eq:small-local}.

Set $H_2=H-(P_0\cup P_1)$. We next claim that, for $v\in V(H_2)\setminus X$,
\begin{equation}\label{eq:second-residual-degree}
 d_{H_2}(v)\ge p.
\end{equation}
Suppose otherwise. Choose $w(v)+1\le4$ distinct neighbours of $v$ in $P_0\cup P_1$. For a chosen neighbour $u\in P_0$, take $\{u\}$; for a chosen neighbour $u\in P_1$, take $C_u$.
Let $U$ be the union of the sets so obtained.
Note that every vertex $u$ of $U\cap P_1$ is one of the chosen centres, and all of its $w(u)$ neighbours in $P_0$ belong to $U$. In the other hand, all other vertices of $U$ belong to $P_0$ and have weight zero. Hence $e_H(U)=\sum_{u\in U}w(u)$.
Moreover, the vertex $v$ does not belong to $U$ and has at least $w(v)+1$ neighbours in $U$. Therefore
\[
 e_H(U\cup\{v\})
 \ge\sum_{u\in U}w(u)+w(v)+1
 >\sum_{u\in U\cup\{v\}}w(u).
\]
But $U\cup\{v\}$ is disjoint from $X$ and has at most
$1+4\cdot4=17$ vertices. This contradicts \eqref{eq:small-local} and proves
\eqref{eq:second-residual-degree}.

For $j\in\{0,1,2,3\}$, let $n_j=|\{v\in V(H):d_H(v)=p+j\}|$.
Since $2e(H)\le(p+1)N$, we have
\begin{equation}\label{eq:degree-balance}
 n_0\ge n_2+2n_3.
\end{equation}
Since $P_0$ is independent and each of its vertices has degree $p$,
we have $e(H_1)=e(H)-p|P_0|$. Thus
\begin{align*}
 (p+1)|V(H_1)|-2e(H_1)= (p+1)N-2e(H)+(p-1)|P_0|\ge(p-1)|P_0|.
\end{align*}
On the other hand, $(p+1)|V(H_1)|-2e(H_1)=\sum_{v\in V(H_1)}(p+1-d_{H_1}(v))$. By
\eqref{eq:first-residual-degree}, each vertex outside $X$ contributes
$1$ if it belongs to $P_1$, and at most $0$ otherwise. Each vertex of
$X$ contributes at most $p+1$. Hence
\[
 |P_1|\ge(p-1)|P_0|-(p+1)|X|.
\]
Writing $q=|P_0|+|P_1|$ and using $|P_0|\ge n_0-|X|$, we obtain
\begin{equation}\label{eq:deleted-lower}
 q\ge p n_0-(2p+1)|X|.
\end{equation}
In each deletion round $i\in\{1,2\}$, $P_i$ is independent and each of its vertices has degree exactly $p$ in the current graph. Therefore $e(H_2)=e(H)-pq\le (p+1)N/2-pq$.
Moreover, none of the vertices of $X$ has been deleted. By
\eqref{eq:second-residual-degree},
\[
 (p+1)N-2pq
 \ge2e(H_2)
 \ge p(N-q-|X|).
\]
It follows that $q\le N/p+|X|$. Together with
\eqref{eq:deleted-lower}, this gives
\begin{equation}\label{eq:low-degree-count}
 n_0\le\frac{N}{p^2}+\frac{2(p+1)}p\,|X|.
\end{equation}

Finally, $|D|=n_1$, and the number of edges from $D$ to its complement
is at most $pn_0+(p+2)n_2+(p+3)n_3$. Consequently,
\begin{align*}
 e_H(D)-|D|
 &\ge\frac{(p+1)n_1-pn_0-(p+2)n_2-(p+3)n_3}{2}-n_1\\
 &=\frac{p-1}{2}N-\frac{2p-1}{2}n_0
                    -\frac{2p+1}{2}n_2-\frac{2p+2}{2}n_3\\
 &\ge\frac{p-1}{2}N-2p n_0,
\end{align*}
where the second equality holds by $N=n_0+n_1+n_2+n_3$ and the last inequality holds since
\[
 \frac{2p+1}{2}n_2+\frac{2p+2}{2}n_3
 \le\frac{2p+1}{2}(n_2+2n_3)
 \le\frac{2p+1}{2}n_0
\]
by~\eqref{eq:degree-balance}. Together with \eqref{eq:low-degree-count}, this proves~\eqref{eq:degree-surplus}.
\end{proof}

We also need a small set whose induced graph has more edges than
vertices. 

\begin{lemma}\label{lem:small-dense-set}
Let $d\ge2$ and $\ell\ge2$ be integers. Let $F$ be a graph on at
most $N$ vertices with maximum degree at most $d$. If $e(F)-|V(F)|>N^2/\ell^2,$ then there is a set $S\subseteq V(F)$ such that
$|S|\le2d\ell$ and $e_F(S)>|S|$.
\end{lemma}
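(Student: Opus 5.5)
The plan is to reduce to a graph in which every vertex lies on cycles and to look for a small subgraph with two independent cycles. Any connected subgraph $T$ with cyclomatic number at least $2$ satisfies $e(T)\ge |V(T)|+1$, so its vertex set $S$ has $e_F(S)\ge e(T)>|S|$. Such a $T$ can be taken to be a theta graph, or two cycles joined by a path. So it suffices to find one of these on at most $2d\ell$ vertices.

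First I dispose of the trivial range. If $|V(F)|\le 2d\ell$, take $S=V(F)$; the hypothesis gives $e_F(S)-|S|>N^2/\ell^2\ge0$. Note also that $e(F)\le dN/2$, so the hypothesis forces $N^2/\ell^2<dN/2$, that is, $N<d\ell^2/2$. Hence the interesting case is $2d\ell<N<d\ell^2/2$.

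Next I clean up $F$ without decreasing the excess $m=e(F)-|V(F)|$.
\begin{enumerate}[label=(\alph*)]
\item Repeatedly delete vertices of degree at most $1$. Each deletion removes at most one edge and one vertex, so the excess does not decrease, and we reach the $2$-core $F'$ with minimum degree at least $2$ and excess at least $m$.
\item Decompose $F'$ into branch vertices (degree at least $3$) and threads, which are maximal paths whose internal vertices have degree $2$. Components that are bare cycles have excess $0$ and are discarded.
\item Call a thread long if it has more than $\ell$ internal vertices. Since all threads together contain at most $N$ vertices, there are fewer than $N/\ell$ long threads.
\item Delete the interiors of all long threads. Deleting the interior of one thread lowers the excess by exactly $1$, so the excess stays above $N^2/\ell^2-N/\ell$. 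This is positive because $N>\ell$ in the interesting range.
\item Take the $2$-core again and iterate. Further deletions only shorten or merge threads, and any merged thread that becomes long is handled by the same counting, since the total number of thread vertices never exceeds $N$.
\end{enumerate}
The result is a nonempty graph $F''$ with positive excess in which every thread has at most $\ell$ internal vertices. Suppressing the degree-$2$ vertices turns $F''$ into a multigraph $M$ with minimum degree at least $3$ and maximum degree at most $d$.

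In $M$ I look for a short theta or dumbbell. Choose a branch vertex $u$ and run BFS in $M$ until two non-tree edges appear. The union of the two tree paths to the endpoints of each non-tree edge, together with those edges, is a connected subgraph of cyclomatic number $2$. Translating back to $F$, each $M$-edge costs at most $\ell+1$ vertices. So if the configuration uses at most about $2d$ edges of $M$, we get $|S|\le 2d\ell$ as required.

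The main obstacle is bounding the number of $M$-edges used by this configuration by $O(d)$ rather than by the $O(\log|V(M)|)$ that plain BFS gives. Here I would try to exploit the quadratic excess more aggressively. The bound $m>N^2/\ell^2$ says $M$ has many more edges than vertices relative to the thread lengths, so the average thread must be much shorter than $\ell$. Concretely, I would redo the long-thread deletion with the threshold $L=\ell/d$ in place of $\ell$, using $N/L\cdot$(excess loss) against $N^2/\ell^2$. I would also use that a branch vertex has at most $d$ incident threads. The hope is to force two cycles through a common branch vertex, or a theta, within distance about $d$ threads of $u$. If that balancing fails, the fallback is to choose $u$ as a vertex of maximum local density and argue directly on its ball of radius $\ell$ in $F''$, using the degree bound $d$ to limit the ball size to about $2d\ell$.
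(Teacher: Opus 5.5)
Your preliminary reductions are sound: the trivial range, passing to the $2$-core, and deleting the interiors of fewer than $N/\ell$ long threads at a cost of one unit of excess each. But the proposal stops exactly where the content of the lemma lies: you never produce a theta or dumbbell on at most $2d\ell$ vertices.

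A BFS from an arbitrary branch vertex of $M$ is only guaranteed to find two non-tree edges within $O(\log|V(M)|)$ levels. With your per-edge cost bound of $\ell+1$, this gives a set of order $O(\ell\log|V(M)|)$, not $2d\ell$. Multigraphs of minimum degree $3$ and maximum degree $d$ (e.g. cubic graphs of large girth) can have all thetas and dumbbells of length $\Theta(\log|V(M)|)$. So the hypothesis must be used to remove the logarithm, and the proposal does not say how.

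Your two suggested remedies do not close this:
\begin{itemize}
\item Lowering the threshold to $\ell/d$ only changes the cost per $M$-edge to $\ell/d+1$. You would still need $\log|V(M)|=O(d^2)$, which is not available.
\item The fallback rests on a false claim: a ball of radius $\ell$ in a graph of maximum degree $d$ can have on the order of $(d-1)^{\ell}$ vertices, not about $2d\ell$. What is true is that two non-tree edges found within BFS radius $R$ give a theta or dumbbell on at most $4R+1$ vertices. But then you must show that \emph{some} vertex sees two independent cycles within radius about $d\ell/2$, and that is precisely what remains unproved.
\end{itemize}

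The missing idea is a global counting argument, which is where the quadratic quantity $N^2/\ell^2$ comes from. The paper's argument runs as follows.
\begin{itemize}
\item Assume no such $S$ exists. Then components with fewer than $\ell$ vertices have nonpositive excess and can be discarded.
\item Root a spanning tree of each remaining component at a leaf, so every vertex has at most $d-1$ children. Cut it into connected pieces $C_1,\dots,C_q$ with $\ell\le|C_i|\le d\ell$: repeatedly remove the subtree of a deepest vertex whose subtree has at least $\ell$ vertices, and merge any small leftover into an adjacent piece. Thus $q\le N/\ell$.
\item Each piece spans at most $|C_i|$ edges.
\item Two pieces are joined by at most two edges. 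Otherwise, both being connected, $e_F(C_i\cup C_j)\ge(|C_i|-1)+(|C_j|-1)+3=|C_i\cup C_j|+1$ with $|C_i\cup C_j|\le2d\ell$.
\item Summing gives $e(F)-|V(F)|\le2\binom q2\le q^2\le N^2/\ell^2$, a contradiction.
\end{itemize}
Here $2d\ell$ is the size of a union of two pieces, and $N^2/\ell^2$ bounds the number of pairs of pieces. Neither is visible from a search around a single vertex. With this argument your thread-cleaning steps become unnecessary.
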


\begin{proof}
Suppose that no such set exists. Every connected component of $F$
with fewer than $\ell$ vertices has at most as many edges as vertices.
Delete all such components, and call the resulting graph $F'$. Then
\begin{equation}\label{eq:component-deletion}
 e(F')-|V(F')|\ge e(F)-|V(F)|>\frac{N^2}{\ell^2}.
\end{equation}

We partition each connected component of $F'$ into connected vertex
sets of orders between $\ell$ and $d\ell$. To justify this, take a
spanning tree of the component and root it at a leaf. Every vertex has
at most $d-1$ children. As long as the remaining rooted tree has at
least $\ell$ vertices, choose a deepest vertex whose descendant subtree
has at least $\ell$ vertices, and remove that subtree. Every child
subtree of the chosen vertex has fewer than $\ell$ vertices, so the
removed set has order at most $1+(d-1)(\ell-1)$.
The remaining vertices, if any, still form a rooted tree. If the final
remainder is nonempty and has fewer than $\ell$ vertices, merge it with
one of the previously removed sets adjacent to it in the original
spanning tree. Such a set exists by connectedness. The merged set is
connected and has order at most
\[
 (\ell-1)+1+(d-1)(\ell-1)
 =d\ell-d+1\le d\ell.
\]
At least one set was removed from each component because its order was
at least $\ell$. Thus the construction gives the claimed partition,
including when the final remainder is empty.

Let the resulting sets be $C_1,\ldots,C_q$. They are disjoint, cover
$V(F')$, and satisfy $\ell\le|C_i|\le d\ell$, so $q\le N/\ell$.
Our supposition implies $e_{F'}(C_i)\le|C_i|$ for every $i$.
Moreover, at most two edges join any two different sets $C_i,C_j$.
Otherwise, since each set is connected,
\[
 e_{F'}(C_i\cup C_j)
 \ge(|C_i|-1)+(|C_j|-1)+3
 =|C_i\cup C_j|+1,
\]
contrary to the supposition, as $|C_i\cup C_j|\le2d\ell$.
Consequently,
\[
 e(F')-|V(F')|
 \le2\binom q2\le q^2\le\frac{N^2}{\ell^2},
\]
contradicting~\eqref{eq:component-deletion}.
\end{proof}

\section{Proof of the main results}
\label{sec:main-proof}
In this section, we give a proof of Theorem \ref{thm:structure}, which together  with  Lemma \ref{lem:extension} implies Theorem \ref{thm:main}.

\begin{proof}[\normalfont\bfseries Proof of Theorem~\ref{thm:structure}]
Let $G$ have order $n\ge10^7$, and fix an arbitrary minimum
bisection $(A,B)$. Set $p=\floor{r/2}+1$. Suppose that $B$ contains no $p$-cohesive set.

Apply Lemma~\ref{lem:exchange}, with the notation used there.
In particular, $H=G[A\cup\{y_1\}]$ and $N=|A|+1$. Since
$|\,|A|-|B|\,|\le1$, we have $N\ge n/2\ge5\cdot10^6$ and $b\ge N-2$. First apply~\eqref{eq:local-exchange} with $L=17$. Let $D=\{v\in V(H):d_H(v)=p+1\}$. Since
$|X_{17}|\le16r$, Lemma~\ref{lem:two-rounds} gives
\begin{equation}\label{eq:initial-surplus}
e_H(D)-|D|
\ge
\left(\frac{p-1}{2}-\frac2p\right)N
-64r(p+1).
\end{equation}

Set $s=\ceil{\sqrt N}$ and $T=24s$. As $N\ge5\cdot10^6$,
we have $s\le\sqrt N+1\le N/2000+1$. Consequently, $T\le\frac{3N}{250}+24\le N-2\le b.$ Thus $X_T$ is defined. Let $F=H[D\setminus X_T]$. Then
$|V(F)|\le N$ and $\Delta(F)\le p+1$.
Since $|X_T|\le r(T-1)$, deleting $D\cap X_T$ decreases
the number of edges minus the number of vertices by at most
$p|D\cap X_T|\le prT$. Hence~\eqref{eq:initial-surplus}
implies
\begin{equation}\label{eq:remaining-surplus}
e(F)-|V(F)|
\ge
\left(\frac{p-1}{2}-\frac2p\right)N
-64r(p+1)-24prs.
\end{equation}

We claim that
\[
e(F)-|V(F)|>\frac{(p+1)^2N}{144}.
\]
Indeed, for $r\in\{5,8\}$ and $p=\floor{r/2}+1$,
the following numerical inequalities hold:
\[
\frac{p-1}{2}-\frac2p-\frac{(p+1)^2}{144}
\ge\frac{2pr}{135},
\qquad
64r(p+1)\le86pr.
\]
It follows from~\eqref{eq:remaining-surplus} that
\begin{align*}
e(F)-|V(F)|-\frac{(p+1)^2N}{144}
&\ge pr\left(\frac{2N}{135}-24s-86\right)\\
&\ge pr\left(
\left(\frac2{135}-\frac3{250}\right)N-110
\right)>0,
\end{align*}
where the last inequality follows from $N\ge5\cdot10^6$.

Now take $\ell=12s/(p+1)$, which is an integer for
$p\in\{3,5\}$. Since $s^2\ge N$,
\[
e(F)-|V(F)|
>\frac{(p+1)^2N}{144}
\ge\frac{N^2}{\ell^2}.
\]
Applying Lemma~\ref{lem:small-dense-set} with $d=p+1$,
we obtain a set $S\subseteq V(F)$ such that
\[
e_F(S)>|S|,
\qquad
|S|\le2(p+1)\ell=24s=T.
\]
Since $S\subseteq D\setminus X_T$, every vertex of $S$
has degree exactly $p+1$ in $H$. Applying
\eqref{eq:local-exchange} with $L=T$ yields
\[
e_F(S)=e_H(S)
\le\sum_{v\in S}(d_H(v)-p)
=|S|,
\]
a contradiction. Thus both parts of the chosen minimum bisection contain
a $p$-cohesive set. Since the minimum bisection is arbitrary, the theorem follows.
\end{proof}

\begin{proof}[\normalfont\bfseries Proof of Theorem~\ref{thm:main}]
Choose a minimum bisection of $G$. By
Theorem~\ref{thm:structure}, its two parts contain disjoint
$\ceil{r/2}$-cohesive sets. Lemma~\ref{lem:extension} extends them to
an internal partition of $G$.
\end{proof}

\section*{Declaration on the Use of AI}

The authors used ChatGPT 5.6 to assist in discussing proof strategies, checking proofs, and improving exposition.


\begin{thebibliography}{9}

\bibitem{BL2016}
A. Ban~and N.~Linial,
\textit{Internal partitions of regular graphs},
J. Graph Theory \textbf{83} (2016) 5--18.


\bibitem{BNP2024}
P.~B\"arnkopf, Z.~L.~Nagy and Z.~Paulovics,
\textit{A note on internal partitions: the $5$-regular case and beyond},
Graphs Combin. \textbf{40} (2024) 36.

\bibitem{BTV2003}
C.~Bazgan, Z.~Tuza and D.~Vanderpooten,
\textit{On the existence and determination of satisfactory partitions in a graph},
in {\it Algorithms and computation}, Springer, Berlin, Heidelberg, 2003, 444--453.

\bibitem{BTV2006}
C.~Bazgan, Z.~Tuza and D.~Vanderpooten,
\textit{The satisfactory partition problem},
Discrete Appl. Math. {\bf 154} (2006) 1236--1245.


\bibitem{DeVos2009}
M.~DeVos,
\textit{Friendly partitions},
Open Problem Garden (2009),
\url{http://www.openproblemgarden.org/op/friendly_partitions}.


\bibitem{Diwan2000}
A.~A. Diwan,
\textit{Decomposing graphs with girth at least five under degree constraints},
J. Graph Theory {\bf 33} (2000) 237--239


\bibitem{GK2000}
M.~U.~Gerber and D.~Kobler,
\textit{Algorithmic approach to the satisfactory graph partitioning problem},
European J. Oper. Res. \textbf{125} (2000) 283--291.

\bibitem{Kaneko1998}
A. Kaneko,
\textit{On decomposition of triangle free graphs under degree constraints},
J. Graph Theory \textbf{27} (1998) 7--9.


\bibitem{MW2024}
J. Ma and H. Wu,
\textit{Partitioning graphs with linear minimum degree},
Random Struct. Alg. {\bf 65} (2024) 601--609

\bibitem{MY2019}
J. Ma and T. Yang,
\textit{Decomposing $C_4$-free graphs under degree constraints},
J. Graph Theory {\bf 90} (2019) 13--23

\bibitem{SD2002}
K.~H.~Shafique and R.~D.~Dutton,
\textit{On satisfactory partitioning of graphs},
Congr. Numer. \textbf{154} (2002) 183--194.

\bibitem{Stiebitz1996}
M. Stiebitz,
\textit{Decomposing graphs under degree constraint},
J. Graph Theory \textbf{23} (1996) 321--324.

\bibitem{Thomassen1983}
C.~Thomassen,
\textit{Graph decomposition with constraints on the connectivity and minimum degree},
J. Graph Theory \textbf{7} (1983) 165--167.

\bibitem{WW2024}
Y.~Wang and H.~Wu,
\textit{Graph partitions under average degree constraint},
J. Combin. Theory Ser. B \textbf{165} (2024) 197--210.


\end{thebibliography}
\end{document}